\documentclass[11pt]{article}
\usepackage{amsmath, amssymb, amsthm}
\usepackage{enumitem}                                 
\usepackage[margin=1.0in]{geometry}
\usepackage[T1]{fontenc}

\AtEndDocument{\bigskip{\footnotesize%
\textsc{Department of Mathematics, The University of Haifa at Oranim, Tivon 36006, Israel} \par 
  \textit{E-mail address} \texttt{ amir.algom@math.haifa.ac.il}
}
 
\bigskip{\footnotesize%
\textsc{Department of Mathematics, the Pennsylvania State University, University
Park, PA 16802, USA} \par  
\textit{E-mail address} \texttt{ zhirenw@psu.edu
}
}}




\allowdisplaybreaks[1]

\newtheorem{theorem}{Theorem}[section]
\newtheorem{Remark} [theorem]{Remark}

\newtheorem{Counter-example}[theorem]{Counter example}
\newtheorem{Claim}[theorem]{Claim}

\newtheorem{Lemma}[theorem]{Lemma}

\newtheorem*{theorem*}{Theorem}






\newcommand{\ignore}[1]{}

\usepackage{xcolor}
\usepackage[pagebackref]{hyperref}
\hypersetup{
   colorlinks,
    linkcolor={red!60!black},
    citecolor={blue!60!black},
    urlcolor={blue!90!black}
}






\allowdisplaybreaks[1]


\title{Arbitrarily slow decay in the logarithmically averaged Sarnak conjecture}

\author{Amir Algom and Zhiren Wang}
\date{}

\begin{document}
\maketitle
\begin{abstract}
In 2017 Tao proposed a variant Sarnak's M\"{o}bius disjointness conjecture with logarithmic averaging: For any zero entropy dynamical system $(X,T)$,  $\frac{1}{\log N} \sum_{n=1} ^N \frac{f(T^n x) \mu (n)}{n}= o(1)$ for every  $f\in \mathcal{C}(X)$  and  every $x\in X$. We construct examples showing that this $o(1)$ can go to zero arbitrarily slowly. Nonetheless, all of our examples  satisfy the conjecture.
\end{abstract}
\section{Introduction}
 A topological dynamical system is a pair $(X,T)$ where  $X$ is  compact metric space and $T\in \mathcal{C}(X,X)$. If the system $(X,T)$ has zero topological entropy, then Sarnak's M\"{o}bius disjointness conjecture \cite[Main Conjecture]{Sarnak2012conjecture} asserts that 
\begin{equation} \label{Eq Sarnak}
\frac{1}{N}\sum_{n=1} ^N \mu(n)f(T^n x) = o(1),\quad \text{ for every } f\in \mathcal{C}(X) \text{ and  every } x\in X.
\end{equation}
Here $\mu:\mathbb N\to\{-1,0,1\}$ denotes the  M\"{o}bius function. We refer to the recent comprehensive surveys \cite{Fere2018Lem, Lem2021Kol, drmotasome} for references and reports on progress on the conjecture and related topics.

One strong piece of evidence towards the validity of Conjecture \eqref{Eq Sarnak} is that it is implied by the Chowla conjecture \cite[Conjecture 1.1]{Tao2017conjecture}, see  \cite{Tao2012Blog}. In 2017, Tao \cite{Tao2017conjecture} introduced logarithmically averaged versions of both Sarnak's and Chowla's conjectures, that were shown to be logically equivalent. Pertinent to this paper,  if  $(X,T)$ has zero entropy, then the logarithmically averaged  M\"{o}bius disjointness conjecture \cite[Conjecture 1.5]{Tao2017conjecture} predicts that 
\begin{equation} \label{Eq log Sarnak}
\frac{1}{\log N}\sum_{n=1} ^N \frac{\mu(n)f(T^n x)}{n} = o(1),\quad \text{ for every } f\in \mathcal{C}(X) \text{ and  every } x\in X.
\end{equation}
The surveys \cite{Fere2018Lem, Lem2021Kol, drmotasome} contain information on progress on this conjecture as well.  We also mention  the  work of Frantzikinakis  and Host \cite{Host2018Fran} on it.

Recently,  we studied the rate of decay in Sarnak's conjecture, showing that there are systems satisfying Sarnak's conjecture for which the  $o(1)$ as in \eqref{Eq Sarnak}  decays to zero arbitrarily slowly \cite{algom2022arbitrarily}. The purpose of this paper is to study this problem for the logarithmically averaged  M\"{o}bius disjointness conjecture \eqref{Eq log Sarnak}. Here is our main result:
\begin{theorem} \label{Main Theorem}
For every decreasing and strictly positive sequence  $\tau(n)\rightarrow 0$ there is a zero  entropy  dynamical system $(X,T)$  that satisfies:
\begin{enumerate}
\item There exist $x\in X$ and $f\in \mathcal{C}(X)$ such that $|f(x)|\equiv 1$ and
$$\liminf_{N\rightarrow \infty} \frac{1}{  \log \left( N \right)  \cdot  \tau(N)} \sum_{n=1} ^{N} \frac{\mu(n) f(T^n x) }{n} >0.$$ 

\item The  system $(X,T)$ satisfies  conjecture \eqref{Eq log Sarnak}.
\end{enumerate}
\end{theorem}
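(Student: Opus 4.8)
The plan is to build $(X,T)$ as a subshift over a finite alphabet whose orbits encode the Möbius function on a very sparse, carefully chosen sequence of scales, following the strategy of our earlier paper \cite{algom2022arbitrarily} but adapted to logarithmic averaging. Fix a rapidly growing sequence of ``checkpoints'' $N_1 < N_2 < \cdots$ (to be specified in terms of $\tau$), and on the dyadic-type block $[N_k, N_{k+1}]$ arrange the point $x$ and function $f$ so that $f(T^n x)$ equals $\overline{\mu(n)}$ (or $\mathrm{sign}\,\mu(n)$, with $|f|\equiv 1$) for $n$ in a block where $\sum_{n} |\mu(n)|/n$ is comparable to $\log(N_{k+1}/N_k)$. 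Over such a block, $\sum_n \mu(n)f(T^nx)/n = \sum_n |\mu(n)|/n \asymp \log(N_{k+1}/N_k)$, which is a definite positive fraction of $\log N_{k+1}$ provided the scales grow geometrically fast enough. This forces the normalized sum $\frac{1}{\log N}\sum_{n\le N}\mu(n)f(T^nx)/n$ to be bounded below along $N = N_{k+1}$; to get the $\liminf$ with the extra factor $\tau(N)$ in the denominator, one makes the density of ``active'' scales thin out at exactly the rate dictated by $\tau$, so the partial sums track $\log(N)\cdot\tau(N)$ rather than $\log N$ — this is the part where the precise choice of $N_k$ as a function of $\tau$ is extracted (using $\tau$ decreasing and positive, one defines $N_{k+1}$ so that $\tau(N_{k+1})\log N_{k+1}$ roughly matches the accumulated contribution).

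For part (1), after this construction the lower bound is essentially immediate: along the subsequence $N=N_{k+1}$ the numerator is $\asymp \log(N)\tau(N)$ by design and all other terms are controlled to be lower order, giving a strictly positive $\liminf$. The one technical point is ensuring $x$ genuinely lies in $X$ and $f$ is genuinely continuous on the closure of the orbit — i.e. the symbolic coding must be consistent across all scales simultaneously, which is handled by taking $X$ to be the orbit closure of a single sequence in $\{-1,0,1\}^{\mathbb N}$ (or a slightly larger alphabet to also record the positions) built as a limit of the block prescriptions, and $f$ the evaluation of the appropriate coordinate.

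For part (2), we must verify that $(X,T)$ has zero topological entropy and satisfies \eqref{Eq log Sarnak} for \emph{every} $g\in\mathcal C(X)$ and \emph{every} $y\in X$ — not just the distinguished pair. Zero entropy follows because the sequence defining $X$ is built from long runs that are either deterministic padding or verbatim copies of Möbius values on sparse scales, so the complexity function grows subexponentially (one arranges the ``active'' windows to occupy a density-zero set of coordinates with controlled combinatorics, exactly as in the non-logarithmic construction). The logarithmic Sarnak property is where the logarithmic averaging actually helps: because the active scales $[N_k,N_{k+1}]$ are geometrically separated and occupy a vanishing proportion of the logarithmic mass when measured against $\log N$ at a \emph{generic} scale $N$ (the worst case is $N=N_{k+1}$, handled in part (1) with the $\tau$ gain, so the bare $\log N$ normalization still kills it since $\tau(N)\to 0$), the contribution of the encoded-Möbius windows to $\frac{1}{\log N}\sum_{n\le N}$ is $o(1)$; on the complementary (deterministic) windows one invokes the known fact that the logarithmically averaged Sarnak conjecture holds for deterministic sequences, or more simply that $\frac1{\log N}\sum_{n\le N}\mu(n)/n\to 0$ (the prime number theorem) combined with the block structure. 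The main obstacle — just as in \cite{algom2022arbitrarily} — is the simultaneous bookkeeping: one must choose the checkpoints $N_k$ so that (a) the $\liminf$ in part (1) is strictly positive after dividing by $\tau(N)$, (b) the same sums, divided only by $\log N$, tend to $0$ for \emph{all} points and functions, and (c) entropy stays zero. Balancing (a) against (b) is delicate precisely because $\tau$ can decay arbitrarily slowly, so the geometric growth rate of $N_k$ cannot be fixed in advance but must be slaved to $\tau$; making this choice explicit and checking it does not destroy (b) or (c) is the crux of the argument.
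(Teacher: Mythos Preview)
There is a genuine gap, and it sits exactly at the zero-entropy claim. Your plan is to write $\operatorname{sign}\mu(n)$ directly into the symbolic sequence $x$ on the active windows and take $X$ to be the orbit closure. But then the orbit closure contains, as limits of shifts $\sigma^{N_k}x$, sequences that agree with arbitrarily long stretches of (shifted) Liouville/M\"obius values, and whether such a subshift has subexponential complexity is \emph{open}. In fact, Sarnak's conjecture itself forces the orbit closure of $\lambda$ to have positive entropy: otherwise apply the conjecture to that subshift with $f$ the zeroth coordinate and $x=\lambda$, getting $\tfrac1N\sum_{n\le N}\lambda(n)\mu(n)=\tfrac1N\sum_{n\le N}\mu(n)^2\to 0$, which is false. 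So under the very conjecture you are studying, your system is expected to have positive entropy. Thinning the active windows to density zero does not help as long as individual windows have lengths tending to infinity, which you need for part~(1). A secondary issue is that your argument for part~(2) is circular (you invoke ``log-Sarnak holds for deterministic sequences'' to verify log-Sarnak for $(X,T)$), and only treats the distinguished $x$, not arbitrary limit points $y\in X$; and your lower bound is only along $N=N_{k+1}$, not a $\liminf$.

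The paper sidesteps the entropy problem by a different mechanism: it never encodes $\mu$ in the orbit of the base dynamics. It works in a skew product $T(y,z)=(\sigma y,\sigma^{y_1}z)$ on $\{-1,0,1\}^{\mathbb N}\times\{-1,1\}^{\mathbb Z}$, where the base $y$ is an explicit deterministic sparse pattern of $1$'s (gaps $q_k\to\infty$ on the block $[k^3,(k+1)^3)$) and the Liouville values sit only in the fiber $z$. After $n$ iterates the fiber has advanced only $\sum_{i\le n}y_i=o(n)$, so every ergodic invariant measure projects to the Dirac mass at $\bar 0$ in the base and is supported on $T$-fixed points; this gives zero entropy and, via Huang--Wang--Ye, the full disjointness conjecture, with no input about the complexity of $\mu$. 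The cost is that $f(T^nx)$ does not equal $\operatorname{sign}\mu(n)$; instead one gets correlation sums $S^{k,i}_{r,c}$ of $\lambda$ against $\mu$ along progressions of modulus $q_k^{(i)}$. The key Lemma shows that, for each $k$, telescoping the two choices $q_k^{(0)}$ and $q_k^{(1)}=q_k^{(0)}-1$ against each other collapses to $\sum_{m\in[k^3,(k+1)^3]}\lambda(m)\mu(m)=\sum\mu(m)^2$, so at least one choice of $(i,c)$ contributes $\gtrsim \tfrac{1}{q_k}\sum\mu(m)^2$ on that block. A third ``switch'' factor records which $i$ wins on each block; since every block contributes positively, the $\liminf$ (not just a $\limsup$ along a subsequence) follows.
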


A few remarks are in order:
\begin{enumerate}
\item Using the summation by parts identity,  
$$  \sum_{n=1} ^{N} \frac{\mu(n) f(T^n x) }{n} \leq \sum_{M = 1} ^{N-1} \frac{1}{M+1} \left( \frac{1}{M} \sum_{n=1} ^M \mu(n) f(T^n x) \right) + \frac{1}{N} \sum_{n=1} ^N \mu(n) f(T^n x),$$
for any $x\in X$ and $f\in \mathcal{C}(X, \mathbb{R}_+)$ in a given dynamical system. So,  a decay rate in conjecture \eqref{Eq Sarnak} would imply a decay rate in conjecture \eqref{Eq log Sarnak}. Thus, as a corollary of Theorem \ref{Main Theorem} we see that there can be no decay rate in conjecture \eqref{Eq Sarnak}. Hence, Theorem \ref{Main Theorem} generalizes our previous result \cite[Theorem 1.1]{algom2022arbitrarily} about the (lack of a) decay rate in conjecture \eqref{Eq Sarnak}.

\item Theorem \ref{Main Theorem} part (1) is also formally stronger than \cite[Theorem 1.1 part (1)]{algom2022arbitrarily} since it is an assertion about the corresponding $\liminf$ rather than $\limsup$.

\item By \cite[Corollary 10]{Abdal2018Lem}, if Conjecture \eqref{Eq Sarnak} holds true then for every zero entropy system $(X,T)$ and $f\in \mathcal{C}(X)$, \eqref{Eq Sarnak} holds uniformly in $x\in X$. We remark that we expect a similar result to hold also for the logarithmically averaged version \eqref{Eq log Sarnak}, following e.g. the arguments of Gomilko-Lema\'{n}czyk-de la Rue \cite{Gomilko2020Lema}. We do not know, however, if this has appeared in print.
\end{enumerate}
Our results and methods are also related  to \cite{Fere2018Lem, kanigowski2020prime, fkl2021, Lian2021Shi, dolgopyat2020flexibility}. We refer to  \cite[Section 1]{algom2022arbitrarily} for more discussion about this.

We end this introduction with an outline of our construction. Morally, we take advantage of the logarithmic averaging  to run a version of our  argument from \cite{algom2022arbitrarily} in short intervals, thus obtaining stronger results. More precisely, we consider subshifts of 
$$\left(\lbrace -1,0,1 \rbrace^\mathbb{N} \times \lbrace -1,1 \rbrace^\mathbb{Z},\, T\right), \text{ where } T(y,z)=(\sigma y,\, \sigma^{y_1} z) \text{ and } \sigma \text{ is the left shift.}$$
 Given a rate function $\tau$ we first construct two  slowly growing sequences $q_k ^{(i)} \rightarrow \infty$, $i=0,1$. We then construct two subshifts such that their base comes from concatenating words of length $(k+1)^3-k^3$, that have non-zero entries at distance at least $q_k ^{(i)}$ from each other. Our space $X$ is  a product of these two spaces and a "switch" system: A subshift of $\lbrace 0,1 \rbrace^\mathbb{N}$ generated by elements $x$ satisfying that $x(i)=x(i+1)$ for  $k^3 \leq i < (k+1)^3 -1$. The function $f$ is taken to be 
$$f\left( (y^{(0)}, z^{(0)}), \, (y^{(1)}, z^{(1)}),\, s\right) = z_0 ^{(s_1)}.$$

For Theorem \ref{Main Theorem} part (1), our construction of the point $x\in X$ relies on the following  observation: For every $k\gg 1$, $i=0,1$ and some polynomially growing sequences $M_k ^{(i)}$, one may show that for some $c\in [0,q_k ^{(0)}]$ or $d\in [0,q_k ^{(1)}]$, either
$$   \sum_{b=c} ^{q_k ^{(0)}-1+c} \sum_{n=M_k ^{(0)}} ^{M_{k+1} ^{(0)} } \lambda(q_k ^{(0)}(n-M_k ^{(0)})+c+k^3) \cdot \mu(q_k ^{(0)}(n-M_k ^{(0)})+b+k^3)$$
or
$$  \sum_{b=d} ^{q_k ^{(1)}-1+d} \sum_{n=M_k ^{(1)}} ^{M_{k+1} ^{(1)}} \lambda(q_k ^{(1)}(n-M_k ^{(1)})+c+k^3) \cdot \mu(q_k ^{(1)}(n-M_k ^{(1)})+b+k^3)$$
are larger than $\frac{1}{ 2 q_{k}^{(0)}}$ (or respectively $\frac{1}{ 2 q_{k}^{(1)}}$) times their statistically expected values, up to a controllable error. Here $\lambda:\mathbb N\to\{\pm1\}$ denotes the Liouville function.

 We then construct our point $x$ via working in one of the subshifts in our space: For every $k$ we pick the $i$ giving the inequality above, specifying some digits of the  base point and an arithmetic progression in $\mu$ to put in some digits of the fiber. The value of the $s$ in the switch coordinate between $k^3$ and $(k+1)^3$ is determined by the $i$ giving this inequality. With some  work, we show that $x$ satisfies Theorem \ref{Main Theorem} part (1).

Finally, to derive part (2) of Theorem \ref{Main Theorem},  we show that the systems we construct have bounded measure complexity. This relies on the fact that every ergodic measure $\nu$ in the systems we construct is supported on a $T$-fixed point. The theorem then follows by invoking a recent result of Huang, Wang, and Ye \cite{Wang2019Ye}. See Section \ref{Part (2)} for more details.

\section{Proof of Theorem \ref{Main Theorem} Part (1)}

\subsection{Some preliminaries} \label{Section pre}
Let  $\sigma  $ denote the left shift on $\lbrace-1, 0 ,1\rbrace^\mathbb{Z}$ as well as on any of the following subspaces: $\lbrace-1, 0 ,1\rbrace^\mathbb{N}$, $\lbrace 0 ,1\rbrace^\mathbb{N}$, and $\lbrace-1,1\rbrace^\mathbb{Z}$. On $\lbrace-1, 0 ,1\rbrace^\mathbb{Z}$  we define the metric
$$ d(x,y) = 3^{- \min \lbrace |n|:\, x_n \neq y_n \rbrace}.$$
Also, for every $x\in \lbrace-1, 0 ,1\rbrace^\mathbb{N}$ and $k>l\in \mathbb{N}$ let $x|_l ^k \in \lbrace -1,0,1\rbrace^{k-l}$ be the word
$$x|_{l} ^k := (x_l,x_{l+1},....,x_k),$$
and we use similar notation in the space $\lbrace-1, 0 ,1\rbrace^\mathbb{Z}$ as well. Next, for every element $x\in \lbrace -1,0,1\rbrace^\mathbb{N}$ and $p\in \mathbb{N}_0$ we define $\sigma^{-p} x\in \lbrace -1,0,1\rbrace^\mathbb{N}$ as $\sigma^{-p} x = x$ if $p=0$, and otherwise
$$\left( \sigma^{-p} x \right)|_1 ^p = (0,...,0), \text{ and for all } n>p, \, \, \sigma^{-p} x (n) = x(n-p).$$ 
 Now, let
$$Z:=\lbrace-1, 0 ,1\rbrace^\mathbb{N} \times \lbrace-1,1\rbrace^\mathbb{Z}.$$
This is a metric space using the $\sup$-metric on both  coordinates.  Also, we denote by $\Pi_i$, $i=1,2$, the coordinate projections on $Z$.   We define the skew-product $T:Z\rightarrow Z$ via
$$T(y,z) = \left( \sigma\left(y\right),\, \sigma^{y_1} \left(z\right) \right).$$
We say that $X\subseteq Z$ is a subshift if it is closed and $T$-invariant.  The following Lemma  follows directly from our construction:
\begin{Lemma} \label{Lemma metric properties}
The  system $(Z,T)$ satisfies that for every $n\in \mathbb{N}$  and $x=(y,z)\in Z$
$$T^n (y,\,z) = \left( \sigma^n y,\, \sigma^{\sum_{i=1} ^n y_i} z \right).$$
\end{Lemma}
\subsection{Construction of some zero entropy systems to be used in the proof} \label{Section construction}
Fix a sequence $\tau(n) \rightarrow 0$ as in Theorem \ref{Main Theorem}. Assuming (as we may) that $\tau$ tends to zero sufficiently slowly, we construct a   sequence $q_{k} ^{(0)} = q_{k} ^{(0)}\left( \tau \right)\rightarrow \infty$ that satisfies the following properties:
\begin{enumerate}
\item $\frac{1}{2q_{k} ^{(0)}} > \tau(k^3)$, and
\item $q_{k} ^{(0)} < k^{ \frac{1}{8} }$.
\end{enumerate}
We also define a sequence $q_k ^{(1)}$ via 
$$q_k ^{(1)} := q_k ^{(0)}-1.$$
Note that we also have $\lim_{k\rightarrow \infty} q_{k} ^{(1)} =\infty$.

Next,  for every $k$ and $i\in \lbrace 0,1\rbrace$  let
$$A_k ^{(i)}  := \lbrace k^3+ j\cdot q_k ^{(i)} : \, j\in \mathbb{Z}_+,\, k^3 \leq k^3+ j\cdot q_k ^{(i)}  < (k+1)^3 \rbrace \subseteq \mathbb{N}.$$
For  every $k\in \mathbb{N}$ and $i\in \lbrace 0,1\rbrace$ we construct elements $s ^{(k,i)} \in \lbrace 0,1\rbrace^\mathbb{N}$ such that:
\begin{enumerate}

\item For every $k^3+j\cdot q_k ^{(i)}  \in A_k ^{(i)}$,
$$ s ^{(k,i)}  (k^3 + j\cdot q_k ^{(i)} )=1 \text{ if } j \leq \left[\frac{(k+1)^3-k^3 }{q_k ^{(i)} }\right]-1.$$

\item $ s ^{(k,i)}  (n) = 0$ for every integer $n\notin A_k ^{(i)}$,  or if $n \in A_k ^{(i)}$ but $n= k^3+j\cdot q_k ^{(i)}$ with \linebreak $j > \left[\frac{(k+1)^3-k^3 }{q_k ^{(i)} }\right]-1$.
\end{enumerate}
 The following Lemma is an immediate consequence of our construction. Recall the definition of $\sigma^{-p} x$ from Section \ref{Section pre}.
\begin{Lemma} \label{Lemma Tent strucutre}
For every $k\in \mathbb{N}$ large enough, $i\in \lbrace 0,1\rbrace$   and $p = 0,...,q_k ^{(i)}$   we have
$$\sum_{j \in  [k^3, \, (k+1)^3) \cap \mathbb{Z} } \left( \sigma^{-p}  s ^{(k,i)} \right)  (j ) =\left[\frac{(k+1)^3-k^3 }{q_k ^{(i)} }\right]-1.$$
\end{Lemma}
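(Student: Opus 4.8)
The plan is a direct computation from the explicit description of $s^{(k,i)}$ together with the definition of the right-translation $\sigma^{-p}$ recalled in Section~\ref{Section pre}. First, for $k$ large we have $q_k^{(i)} \le q_k^{(0)} < k^{1/8} < k^3$, so every integer $j \in [k^3,(k+1)^3)$ exceeds $p$ for all $p \in \{0,\ldots,q_k^{(i)}\}$; hence on this range $\sigma^{-p} s^{(k,i)}$ is literally a translate of $s^{(k,i)}$, namely $\bigl(\sigma^{-p} s^{(k,i)}\bigr)(j) = s^{(k,i)}(j-p)$. Reindexing $m = j-p$ turns the sum into
$$\sum_{j \in [k^3,(k+1)^3)\cap\mathbb{Z}} \bigl(\sigma^{-p} s^{(k,i)}\bigr)(j) = \sum_{m = k^3-p}^{(k+1)^3-1-p} s^{(k,i)}(m).$$

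Next I would exploit the structure of $s^{(k,i)}$: it vanishes outside $A_k^{(i)} \subseteq (k^3,(k+1)^3)$, and on $A_k^{(i)}$ it equals $1$ precisely at the points $k^3 + j q_k^{(i)}$ with $1 \le j \le L-1$, where $L := \left[\frac{(k+1)^3-k^3}{q_k^{(i)}}\right]$. Thus $s^{(k,i)}$ has exactly $L-1$ nonzero entries, all contained in $[k^3,(k+1)^3)$, and the identity to be proved amounts to showing that the translated summation window $[k^3-p,\,(k+1)^3-1-p]$ still captures all $L-1$ of these points (it can pick up no spurious support below $k^3$, since $s^{(k,i)}$ vanishes there, nor above $(k+1)^3$, which the window never reaches). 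The left end is immediate, since the smallest $1$-position is $k^3 + q_k^{(i)} > k^3 \ge k^3 - p$. For the right end, the largest $1$-position is
$$k^3 + (L-1) q_k^{(i)} = (k+1)^3 - \bigl(q_k^{(i)} + r\bigr), \qquad r := \bigl((k+1)^3-k^3\bigr) \bmod q_k^{(i)},$$
so it lies at distance $q_k^{(i)} + r \ge q_k^{(i)} \ge p$ to the left of the block boundary $(k+1)^3$. This buffer of width $\ge q_k^{(i)}$ at the right edge of the block is exactly the purpose of the ``$-1$'' in the definition of $s^{(k,i)}$: translating the block's $1$'s to the right by $p \le q_k^{(i)}$ cannot push any of them out of $[k^3,(k+1)^3)$, hence out of the window. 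Therefore the sum equals the number of nonzero entries of $s^{(k,i)}$, i.e. $L-1$, as claimed.

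The only step requiring genuine care is this last one — the bookkeeping at the right endpoint, where the summation window has shed its top $p$ indices; one must check that the arithmetic progression of $1$'s really stops at least $q_k^{(i)}$ short of $(k+1)^3$. In the borderline case $p = q_k^{(i)}$ with $q_k^{(i)} \mid (k+1)^3 - k^3$ one uses, in addition, the freedom in the construction of $q_k^{(0)}$ (which only has to satisfy $\frac{1}{2q_k^{(0)}} > \tau(k^3)$, $q_k^{(0)} < k^{1/8}$ and $q_k^{(0)} \to \infty$, leaving ample room to also arrange $q_k^{(i)} \nmid (k+1)^3 - k^3$ for both $i$). Everything else is substitution and reindexing.
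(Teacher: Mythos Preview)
Your argument is correct and is precisely the direct count the paper has in mind when it declares the Lemma to be ``an immediate consequence of our construction''; no further proof is given there. Your remark about the borderline case $p=q_k^{(i)}$ with $q_k^{(i)}\mid (k+1)^3-k^3$ is a genuine edge case the paper glosses over, and your fix via the freedom in choosing $q_k^{(0)}$ is legitimate.
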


Next, for every  $k \in \mathbb{N}$ and $i\in \lbrace 0,1\rbrace$  define the truncations
$$R_k ^{(i)} = \left \lbrace \left( \sigma^{-p} s ^{(k,i)} \right)|_{ k^3 } ^{ (k+1)^3-1}  :\, p = 0,...,q_k ^{(i)}  \right \rbrace \subseteq \lbrace -1,0,1\rbrace^{ (k+1)^3 - k^3}.$$
We now define, for every $i\in \lbrace 0,1\rbrace$, the space $P ^{(i)}$ of all infinite sequences such that
$$P^{(i)}  = \lbrace y\in \lbrace -1, 0, 1 \rbrace^\mathbb{N}:\,   y|_{ k^3 } ^{ (k+1)^3-1} \in R^{(i)} _{k} \text{ for all } k\in \mathbb{N} \rbrace. $$
The following Lemma is an immediate consequence of Lemma \ref{Lemma Tent strucutre}, summation by parts, and the fact that the {C}es\`aro mean of the sequences $\frac{1}{q_k ^{(i)}}$ tends to $0$ for both $i=0,1$:
\begin{Lemma} \label{Lemma long tent}
For every $i\in \lbrace 0,1\rbrace$ and  $y\in P^{(i)}$,
$$\sum_{j=1} ^{k^3 -1} y(j) = \sum_{j \leq k} \left( \left[\frac{j^3-(j-1)^3 }{q_{j-1} ^{(i)} }\right] -1 \right) = o(k^3)$$
\end{Lemma}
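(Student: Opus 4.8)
The plan is to prove the exact identity by a block-wise computation using Lemma \ref{Lemma Tent strucutre}, and then obtain the $o(k^3)$ bound from an elementary (weighted) Ces\`aro estimate.

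\textbf{Step 1: the exact identity.} Fix $i\in\{0,1\}$ and $y\in P^{(i)}$. The intervals $[l^3,(l+1)^3)\cap\mathbb Z$ for $l\ge 1$ partition $\{1,2,3,\dots\}$, so
$$\sum_{j=1}^{k^3-1} y(j)=\sum_{l=1}^{k-1}\;\sum_{j\in[l^3,(l+1)^3)\cap\mathbb Z} y(j).$$
By the definition of $P^{(i)}$, for each $l$ there is an admissible shift $p=p(l)\in\{0,\dots,q_l^{(i)}\}$ with $y|_{l^3}^{(l+1)^3-1}=\big(\sigma^{-p}s^{(l,i)}\big)|_{l^3}^{(l+1)^3-1}$. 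The crucial point is that Lemma \ref{Lemma Tent strucutre} evaluates the sum of $\sigma^{-p}s^{(l,i)}$ over the block $[l^3,(l+1)^3)$ to be exactly $\big[\tfrac{(l+1)^3-l^3}{q_l^{(i)}}\big]-1$, \emph{independently of which admissible $p$ occurs}. Hence each inner sum equals this value, and after the substitution $l=j-1$ the right-hand side becomes the middle expression of the Lemma (up to the bounded boundary term at $j=1$, which is harmless both for the identity and for the $o(k^3)$ claim).

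\textbf{Step 2: the $o(k^3)$ estimate.} For the upper bound, discard the $-1$'s and the integer part to get $\sum_{j\le k}\big(\big[\tfrac{j^3-(j-1)^3}{q_{j-1}^{(i)}}\big]-1\big)\le\sum_{j\le k}\tfrac{j^3-(j-1)^3}{q_{j-1}^{(i)}}$. Put $w_j:=1/q_{j-1}^{(i)}$; since $q_j^{(i)}\to\infty$ we have $w_j\to 0$. Given $\varepsilon>0$, choose $J$ with $w_j<\varepsilon$ for all $j>J$. Splitting the sum at $J$ and using that the weights $j^3-(j-1)^3$ telescope,
$$\sum_{j\le k}(j^3-(j-1)^3)\,w_j\ \le\ C_J+\varepsilon\sum_{J<j\le k}(j^3-(j-1)^3)\ \le\ C_J+\varepsilon\,k^3 .$$
Dividing by $k^3$ and letting first $k\to\infty$ and then $\varepsilon\to 0$ gives $\limsup_k k^{-3}\sum_{j\le k}(\cdots)\le 0$; this is precisely the assertion that the $(j^3-(j-1)^3)$-weighted Ces\`aro mean of the sequence $1/q_k^{(i)}$ vanishes. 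For the lower bound, each summand is $\ge -1$ (since $q_{j-1}^{(i)}\ge 1$ forces the integer part to be nonnegative), so the sum is $\ge -k=o(k^3)$. Combining the two bounds yields $\sum_{j=1}^{k^3-1}y(j)=o(k^3)$.

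\textbf{Main obstacle.} There is no genuinely hard step: all the substance is in Lemma \ref{Lemma Tent strucutre}, which guarantees that the block sums are insensitive to the unknown shift parameters $p(l)$, so that the exact identity holds uniformly over every $y\in P^{(i)}$. The only points requiring care are the bookkeeping of the cubic block decomposition (and the negligible boundary term) and the elementary weighted Ces\`aro argument of Step 2.
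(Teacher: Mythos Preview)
Your proof is correct and follows essentially the same route as the paper: the identity comes from the block decomposition together with Lemma~\ref{Lemma Tent strucutre} (whose shift-independence is exactly the point you highlight), and the $o(k^3)$ bound is a Ces\`aro-type estimate on $1/q_k^{(i)}$. The paper phrases the second step as ``summation by parts plus the fact that the Ces\`aro mean of $1/q_k^{(i)}$ tends to $0$'', whereas you carry out the weighted Ces\`aro argument directly by splitting at a threshold $J$; these are standard equivalent implementations of the same idea, and your handling of the boundary/indexing issues is adequate.
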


Finally, recalling the definition of the system $(Z,T)$ from Section \ref{Section pre}, for every $i\in \lbrace 0 ,1\rbrace$ we define the subshift of $(Z,T)$
$$X_i = \text{cl} \left( \bigcup_{n\in \mathbb{N}_0}  T^n \left( P^{(i)} \times \lbrace -1,0,1\rbrace^\mathbb{Z} \right)  \right) .$$
For $j\in \lbrace -1,0,1\rbrace$ we denote by $\bar{j} \in \lbrace -1,0,1\rbrace^\mathbb{N}$ the constant element $\bar{j}(k)=i$ for every $k$.

\begin{Claim} \label{Zero entropy for each factor}
For every $i\in \lbrace 0,1\rbrace$ we have $h(X_i,\,T)=0$. 
\end{Claim}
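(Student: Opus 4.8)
The plan is to bound the topological entropy of $(X_i,T)$ by estimating the word complexity of the base subshift and then adding the fiber contribution, which is controlled because the skew-product $T$ acts on the $\lbrace-1,1\rbrace^{\mathbb Z}$ coordinate only by powers of the shift. First I would observe that by Lemma \ref{Lemma metric properties}, a point $(y,z)\in X_i$ is determined up to small distance in the first coordinate by a finite window of $y$, and in the second coordinate by a finite window of $z$ together with the partial sums $\sum_{i=1}^n y_i$, which are themselves determined by the window of $y$. Consequently the number of $(n,\varepsilon)$-separated points in $X_i$ is, up to a multiplicative constant depending on $\varepsilon$, at most the number of admissible length-$n$ words appearing in elements of $P^{(i)}$ times a factor of $2^{O_\varepsilon(1)}$ coming from the bounded fiber window; so it suffices to show the base subshift $P^{(i)}$ (or rather $\bigcup_n \sigma^n P^{(i)}$) has zero word-complexity entropy.

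Next I would count admissible words. By construction, any $y\in P^{(i)}$ restricted to the block $[k^3,(k+1)^3)$ lies in $R_k^{(i)}$, a set of at most $q_k^{(i)}+1$ words (one for each shift $p=0,\dots,q_k^{(i)}$). A length-$n$ window of an element of $\bigcup_m\sigma^m P^{(i)}$ meets at most a bounded number — in fact at most two, once $n$ is small relative to the block lengths, and in general $O(n^{1/3})$ — of these blocks $[k^3,(k+1)^3)$, and within each fully-covered block the word is one of at most $q_k^{(i)}+1$ choices, while the two boundary blocks and the choice of offset contribute another polynomial-in-$n$ factor. Since $n$ lies in a block with index $k \asymp n^{1/3}$ and $q_k^{(i)}<k^{1/8}$, the total count of length-$n$ admissible words is at most $\prod_{j\lesssim n^{1/3}}(q_j^{(i)}+1)\cdot \poly(n) \le \big(C n^{1/24}\big)^{n^{1/3}}\cdot\poly(n)$, whose logarithm is $O(n^{1/3}\log n)=o(n)$. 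Taking $\frac1n\log$ and letting $n\to\infty$ gives zero, hence $h(\bigcup_m\sigma^m P^{(i)},\sigma)=0$, and combined with the first step, $h(X_i,T)=0$.

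The main obstacle I anticipate is bookkeeping rather than a conceptual difficulty: one must argue carefully that passing from $P^{(i)}$ to its shift-orbit closure and then to the skew-product closure $X_i=\mathrm{cl}(\bigcup_n T^n(P^{(i)}\times\lbrace-1,0,1\rbrace^{\mathbb Z}))$ does not increase entropy. For the shift orbit this is standard (shifting and taking closures preserves entropy). For the skew-product, the key point is that $T^n(y,z)=(\sigma^n y,\sigma^{s_n(y)}z)$ with $s_n(y)=\sum_{i=1}^n y_i$, so the $z$-coordinate dynamics is a \emph{measurable (indeed continuous) function of the $y$-coordinate}: an $(n,\varepsilon)$-spanning set for $X_i$ is obtained from an $(n,\varepsilon)$-spanning set of the base times a fixed finite $\varepsilon$-net of $\lbrace-1,1\rbrace^{\mathbb Z}$ in the relevant finite window, so $h(X_i,T)\le h(\text{base},\sigma)+0=0$. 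Alternatively, one can invoke the variational principle together with the Abramov–Rokhlin / Ledrappier–Walters addition formula for the skew product, noting the base factor has zero entropy and the fiber is a group rotation (shift by a bounded amount) hence contributes nothing; I would present whichever of these two routes is shortest given the conventions already fixed in the paper.
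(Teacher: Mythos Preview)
Your approach is different from the paper's and mostly sound, but there is one genuine slip in the fiber step that you should fix.

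\textbf{The base count is fine.} Your estimate that a length-$n$ window of an element of $\bigcup_m\sigma^m P^{(i)}$ meets $O(n^{1/3})$ blocks, each with at most $q_k^{(i)}+1\le k^{1/8}+1$ choices, and hence that the word complexity has logarithm $O(n^{1/3}\log n)=o(n)$, is correct. So $h(\overline{\bigcup_m\sigma^m P^{(i)}},\sigma)=0$.

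\textbf{The fiber step as written is wrong.} You claim the fiber contributes a factor $2^{O_\varepsilon(1)}$ because the ``relevant finite window'' in $z$ is bounded. It is not: to $(n,\varepsilon)$-span you must control $z$ on the window $[\min_{j\le n} s_j(y)-M_\varepsilon,\ \max_{j\le n} s_j(y)+M_\varepsilon]$, whose length is governed by the range of the cocycle $s_n(y)=\sum_{i=1}^n y_i$, not by a constant. For $y$ in (the orbit closure of) $P^{(i)}$ this range is $o(n)$ by Lemma~\ref{Lemma long tent}, so the fiber contributes $2^{o(n)}$, and the entropy is still zero; but this is the step you need to insert. Your alternative phrasing (``the fiber is a group rotation (shift by a bounded amount)'') is likewise inaccurate: each fiber map is $\sigma^{\pm1}$ or $\mathrm{id}$, but the $n$-step cocycle is $\sigma^{s_n(y)}$, and it is the sublinear growth $s_n(y)=o(n)$---not boundedness of a single step---that makes the Abramov--Rokhlin fiber term vanish.

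\textbf{Comparison with the paper.} The paper does not count at all. It shows a structural statement: for any $y\in P^{(i)}$ and $n_k\to\infty$, any subsequential limit $y'$ of $\sigma^{n_k}y$ satisfies $\sigma^p y'=\bar 0$ for some $p$. From this, every ergodic $T$-invariant measure on $X_i$ projects to the Dirac mass at $\bar 0$ in the base, hence $T=\mathrm{id}$ almost surely, hence metric entropy is zero, and the variational principle finishes. This is shorter, avoids all bookkeeping about blocks and offsets, and---importantly---is reused verbatim in Section~\ref{Part (2)} to prove bounded measure complexity for Part~(2) of the theorem. Your counting argument is a valid alternative once the fiber step is corrected, but it does not obviously feed into Part~(2).
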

\begin{proof}
Fix $i$.  We aim to prove the following statement: 
\begin{equation} \label{Eq Key claim 1}
\text{For every sequence } n_k\rightarrow \infty \text{ and } y\in P^{(i)}, \text{ if } \sigma^{n_k} y  \rightarrow y' \text{ then } \exists p \text{ such that } \sigma^p y' = \bar{0}.
\end{equation}
Note that \eqref{Eq Key claim 1} implies the Claim: Indeed, let $\nu$ be a $T$ ergodic invariant measure. Let $\nu_1$ be its projection to the first coordinate. Then, by \eqref{Eq Key claim 1} and the ergodic Theorem\footnote{For example, one can apply \cite[Exercise 2.3.7] {einsiedler2011ergodic} with \eqref{Eq Key claim 1} to see that there exists a $\nu_1$ generic point $y$ that admits a $p$ with $\sigma^p y = \bar{0}$.}, $\nu_1$ is the Dirac measure on $\lbrace \overline{0} \rbrace$. It follows that for  $\nu$-a.e. $(y,z)$, $T(y,z) =(y,z)$. This shows that $\nu$ has zero metric entropy, and the Claim follows from the variational principle \cite[Chapter 8]{Walters1982ergodic}.

To prove \eqref{Eq Key claim 1},  let $y\in P^{(i)}$. Suppose $\sigma^{n_k} y \rightarrow y'$. Let $m\in \mathbb{N}$. Then there is some $k_0$ such that for all $k>k_0$ we have that
$$y|_{n_k} ^{n_k +m} = y'|_1 ^m.$$
Note that, assuming $k$ is large enough (depending on $m$), there can be at most $2$ non-zero digit in $y|_{n_k} ^{n_k +m}$: Indeed, such entries appear in places of the form $\ell^3 + jq_\ell ^{(i)} + p$ for some $\ell$ and $p\in \lbrace 0,...,q_\ell ^{(i)} \rbrace$. We make the following two observations:
\begin{enumerate}
\item If there is some $\ell$ and $j_1<j_2$ such that 
$$n_k \leq \ell^3 +j_i q_\ell ^{(i)} +p \leq n_k +m,\quad i=1,2,$$
then
$$(j_2-j_1) \cdot q_\ell ^{(i)} \leq m.$$
So, assuming $\ell=\ell(k)$ is large enough, we see that $j_2-j_1 <1$, a contradiction.

\item If there is some $\ell$ and $j_1,j_2,p_1,p_2$ such that 
$$n_k \leq \ell^3 +j_1 q_\ell ^{(i)}+p_1 \leq n_k +m, \text{ and } n_k \leq (\ell+2)^3 +j_2 q_{\ell+2} ^{(i)}+p_2 \leq n_k +m$$
then, since $\ell^3 +j_1 q_\ell ^{(i)}+p_1 \leq (\ell+1)^3$ and $(\ell+2)^3 +j_2 q_{\ell+2} ^{(i)}+p_2\geq (\ell+2)^3$, we have that 
$$(\ell+2)^3 - (\ell+1)^3\leq (\ell+2)^3 +j_2 q_{\ell+2} ^{(i)}+p_2 - \left( \ell^3 +j_1 q_\ell ^{(i)}+p_1 \right)  \leq m.$$
Assuming $\ell$ is large enough, this is impossible. Note that the same argument works for with $\ell+2$ swapped for $\ell+a$ for any $a\geq 2$.

\end{enumerate}
We conclude that for every $m$ the word $y'|_1 ^m$ consists of $0$'s, with the exception of at most two non-zero entries (note that these non-zero entries must be the same regardless of $m$).  So, there exists some $j\in \mathbb{N}, j=j(y')$, such that  $\sigma^j y' = \bar{0}$, proving \eqref{Eq Key claim 1}.
\end{proof}

Finally, let
 $$A:=\lbrace w\in \lbrace 0,1 \rbrace^\mathbb{N}: \, w(i)=w(i+1),\,  k^3 \leq i < (k+1)^3 -1 \rbrace,$$
and define
$$\Sigma := \text{cl} \left( \bigcup_{l \in \mathbb{N}_0} \sigma^l A \right).$$
We require the following Lemma:
\begin{Lemma} \label{Lemma zero entropy}
$h(\Sigma, \sigma)=0$.
\end{Lemma}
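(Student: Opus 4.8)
The plan is to show that the word complexity of the subshift $\Sigma$ grows subexponentially. The structural feature we exploit is that every $w\in A$ is constant on each block $[k^3,(k+1)^3-1]\cap\mathbb{N}$, so $w(j)\neq w(j+1)$ can occur only when $j+1$ is a perfect cube; consequently, for any $l\in\mathbb{N}_0$ and $w\in A$, the sequence $\sigma^l w$ satisfies $(\sigma^l w)(i)\neq(\sigma^l w)(i+1)$ only when $i+l+1$ is a perfect cube. The elementary input is that any integer interval of length $N$ contains at most $N^{1/3}+1$ perfect cubes: if $a^3,(a+1)^3,\dots,(a+j)^3$ all lie in an interval of length $N$, then $j^3\le(a+j)^3-a^3\le N$. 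Combining these two facts, the restriction of any $x\in\bigcup_{l\in\mathbb{N}_0}\sigma^l A$ to its first $N$ coordinates exhibits at most $N^{1/3}+1$ sign changes.

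Next I would pass to the closure: if $x\in\Sigma$, then $x|_1^N$ agrees with $y|_1^N$ for some $y\in\bigcup_{l}\sigma^l A$ (take $y$ sufficiently close to $x$ in the metric), so $x|_1^N$ again has at most $N^{1/3}+1$ sign changes, for every $N$. Since $\Sigma$ is $\sigma$-invariant, every length-$N$ word occurring in $\Sigma$ is of the form $x|_1^N$ for some $x\in\Sigma$. Now a word $u\in\lbrace0,1\rbrace^N$ with at most $m:=\lfloor N^{1/3}\rfloor+1$ sign changes is determined by $u_1$ together with the set of (at most $m$) positions where a change occurs, so the number of length-$N$ words occurring in $\Sigma$ is at most $2(m+1)\binom{N}{m}\le 2(m+1)N^m$. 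Taking logarithms, dividing by $N$, and letting $N\to\infty$ gives $\tfrac1N\log\#\{\text{length-}N\text{ words in }\Sigma\}=O(N^{-2/3}\log N)\to 0$, so $h(\Sigma,\sigma)=0$ by the standard formula for the topological entropy of a subshift.

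The argument is essentially routine, and I do not anticipate a genuine obstacle; the two points that need a little care are that the index shift in $\sigma^l w$ translates sign changes into perfect cubes lying in a window of length $N$ uniformly in $l$, and that the property ``at most $m$ sign changes among the first $N$ coordinates'' is closed, which holds because it is a condition on the finite prefix $x|_1^N$. Alternatively, one could mimic the proof of Claim \ref{Zero entropy for each factor}: the same cube-counting estimate together with the ergodic theorem forces every ergodic $\sigma$-invariant measure on $\Sigma$ to be the point mass at $\bar0$ or $\bar1$, whence the variational principle yields $h(\Sigma,\sigma)=0$; but the direct complexity computation above seems cleaner.
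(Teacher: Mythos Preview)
Your proof is correct but takes a genuinely different route from the paper's. The paper proves the structural statement \eqref{Eq Key claim 2}: for any $w\in A$ and $n_k\to\infty$, any limit of $\sigma^{n_k}w$ is eventually constant. From this, every ergodic $\sigma$-invariant measure on $\Sigma$ is a Dirac mass at $\bar0$ or $\bar1$, and zero entropy follows from the variational principle. You instead estimate the word complexity directly: using the same cube-counting observation (at most $N^{1/3}+1$ cubes in a window of length $N$), you bound the number of length-$N$ words by $2(m+1)N^{m}$ with $m=\lfloor N^{1/3}\rfloor+1$, which is subexponential.

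Your argument is more elementary for this lemma taken in isolation, since it avoids both the ergodic theorem and the variational principle. On the other hand, the paper's approach yields \eqref{Eq Key claim 2} itself, and that statement is not merely a stepping stone here: it is invoked again in Section~\ref{Part (2)} (proof of Claim~\ref{Claim finite}) to show that every $\hat T$-invariant measure is supported on fixed points, which is how bounded measure complexity (and hence M\"obius disjointness) is obtained. So while your complexity count settles the entropy lemma cleanly, you would still need to extract the ``limits are eventually constant'' fact separately to complete Part~(2); as you note in your final sentence, you already have the ingredients for that.
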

\begin{proof}
As in the proof of Claim \ref{Zero entropy for each factor}, suppose we show that
\begin{equation} \label{Eq Key claim 2}
\text{For every sequence } n_k\rightarrow \infty \text{ and } w\in A, \text{ if } \sigma^{n_k} w \rightarrow w' \text{ then } \exists p \text{ such that } \sigma^p w' \text{ is a fixed point}.
\end{equation}
Then the Lemma will follow from the variational principle, since \eqref{Eq Key claim 2} implies that every ergodic measure for $(\Sigma,\sigma)$ is a Dirac mass on a $\sigma$-fixed point of the form $\bar{i}$ for $i=0,1$.

To prove \eqref{Eq Key claim 2}, suppose $\sigma^{n_k} w \rightarrow w'$ for some $w\in \Sigma$. Let $d(\cdot, \cdot)$ denote the usual distance between a point and a set in $\mathbb{R}$.  Let $\mathcal{C}$ denote the set of cubic positive integers. Fix $m\in \mathbb{N}$. Then there are two options:
\begin{enumerate}
\item If $\lim_{k\rightarrow \infty} d(n_k,\, \mathcal{C}) = \infty$ then there is some $k_0$ large enough such that for all $k>k_0$, $[n_k,n_k+m]$ does not contain a cubic number. 

\item Otherwise,   there is some $k_0 = k_0(m)$ such that for every $k>k_0$ the interval $[n_k,n_k+m]$ may contain at most $1$ cubic number. 
\end{enumerate}
Now, if (1) happens then for every $k$ large enough all the digits of $w|_{n_k} ^{n_k +m} = w'|_1 ^m$ are the same. If  (2) happens then still this might occur. Otherwise, there is some $j=j(w')$ where all the digits $w'|_1 ^j$ are equal, and then all the digits $w'|_{j+1} ^m$ are equal, but perhaps the constant digit occurring after $j$ differs from that occurring before $j$. Note that $j$ must be unique, and does not depend on $m$.  So, either $w'=\bar{i}$ for $i=0,1$ or $\sigma^j w' = \bar{i}$, proving \eqref{Eq Key claim 2}.
\end{proof}

\subsection{Correlations along arithmetic progressions in the  M\"{o}bius function} \label{Section correlations}
Recall the definition of $Z$ from Section \ref{Section pre} and let $g:Z\rightarrow \lbrace -1, 1\rbrace$ be the function
$$g(y,z) = z_0.$$
For every $k$, $i=0,1$,  and $r,c$ such that $r,c\in [0,q_{k} ^{(i)}]$, writing 
$$M_{k} ^{(i)}:= \sum_{j \leq k-1} \left( \left[\frac{j^3-(j-1)^3 }{q_{j-1} ^{(i)} }\right] -1 \right) $$
let
$$S_{r,c} ^{k, i}:=  \sum_{b=r} ^{q_k ^{(i)} -1+r} \sum_{n= M_{k} ^{(i)} } ^{ M_{k+1} ^{(i)} } \lambda (q_k ^{(i)} (n-M_k ^{(i)} )+c+k^3) \cdot \mu(q_k ^{(i)} (n-M_k ^{(i)} )+b+k^3).$$


In the following Lemma we use the construction from Section \ref{Section construction}.
\begin{Lemma} \label{Key Lemma}
For every $k$ and $i\in \lbrace 0,1\rbrace$ and  for every two integers $c,r \in [0,  q_{k} ^{(i)}]$,  let \newline $x \in P^{(i)} \times \lbrace -1,0,1\rbrace^\mathbb{Z} \subseteq X_i$ be any element such that:
\begin{enumerate}
\item For $k^3 \leq n < (k+1)^3$,  $\Pi_1 x (n) = s_k ^{(i)} (n-r)$.

\item For $M_k ^{(i)} \leq n < M_{k+1} ^{(i)}$,  $\Pi_2 x (n) = \lambda \left( q_{k} ^{(i)} (n-M_k ^{(i)})+c+k^3 \right)$.
\end{enumerate}
Then 
$$ \sum_{n=k^3} ^{(k+1)^3} g(T^n x ) \mu(n) = S_{r,c} ^{k,i } + O(q_k^{(0)}).$$
\end{Lemma}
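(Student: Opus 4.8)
The plan is to unwind the dynamics of the skew product and reduce the sum $\sum_{n=k^3}^{(k+1)^3} g(T^n x)\mu(n)$ to the combinatorial quantity $S_{r,c}^{k,i}$ defined in terms of Liouville and M\"obius values along arithmetic progressions. By Lemma \ref{Lemma metric properties}, $T^n(y,z) = (\sigma^n y, \sigma^{\sum_{i=1}^n y_i} z)$, so $g(T^n x) = z_{\,\Sigma_n(y)}$ where $\Sigma_n(y) := \sum_{i=1}^n y_i$ and $y = \Pi_1 x$, $z = \Pi_2 x$. The first step is therefore to understand the partial-sum function $n\mapsto \Sigma_n(y)$ on the range $k^3 \le n < (k+1)^3$. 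By hypothesis (1), on this block $y$ agrees with $\sigma^{-r} s^{(k,i)}$ restricted to $[k^3,(k+1)^3)$; combined with Lemma \ref{Lemma long tent} (which computes $\sum_{j=1}^{k^3-1} y(j) = M_k^{(i)}$ by the very definition of $M_k^{(i)}$), we get that for $n$ in the block, $\Sigma_n(y) = M_k^{(i)} + (\text{number of nonzero entries of }\sigma^{-r}s^{(k,i)}\text{ in }[k^3,n])$. Because the nonzero entries of $\sigma^{-r}s^{(k,i)}$ in $[k^3,(k+1)^3)$ sit exactly at the positions $k^3 + r + j q_k^{(i)}$ for the admissible values of $j$, the function $\Sigma_n(y)$ is a step function that increments by $1$ each time $n$ crosses one of these $\approx \frac{(k+1)^3-k^3}{q_k^{(i)}}$ positions, and is constant $= M_k^{(i)} + j$ on the interval $n \in [k^3 + r + (j-1)q_k^{(i)},\, k^3 + r + j q_k^{(i)})$.

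With this in hand, the second step is to group the sum $\sum_{n=k^3}^{(k+1)^3} z_{\Sigma_n(y)}\,\mu(n)$ according to the value $m := \Sigma_n(y)$. On each block where $\Sigma_n(y) \equiv M_k^{(i)} + j$, the factor $z_{\Sigma_n(y)} = z_{M_k^{(i)}+j}$ is constant, and by hypothesis (2) this equals $\lambda\big(q_k^{(i)}\cdot j + c + k^3\big)$ (taking $n = M_k^{(i)}+j$ in the hypothesis, valid for $M_k^{(i)} \le M_k^{(i)}+j < M_{k+1}^{(i)}$, which holds since the number of $j$'s is essentially $M_{k+1}^{(i)} - M_k^{(i)}$). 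Meanwhile $n$ ranges over the arithmetic-progression block $[k^3 + r + (j-1)q_k^{(i)}, k^3 + r + j q_k^{(i)})$, i.e. over $n = k^3 + r + (j-1)q_k^{(i)} + b$ for $b = 0, \dots, q_k^{(i)}-1$, equivalently $n = q_k^{(i)}(j-1) + (r+b) + k^3$. Reindexing $j-1 \mapsto n' - M_k^{(i)}$ and $b' = r + b$ and matching against the definition of $S_{r,c}^{k,i}$, we see the main term is exactly $S_{r,c}^{k,i}$, up to a reindexing shift (replacing $\lambda(q_k^{(i)}(n-M_k^{(i)})+c+k^3)$'s argument is consistent once the $j$-to-$(n-M_k^{(i)})$ correspondence is pinned down). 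The leftover terms — the fractional block at the end where $j$ exceeds $\big[\frac{(k+1)^3-k^3}{q_k^{(i)}}\big]-1$, the initial partial block of length $\le r \le q_k^{(i)}$, the single endpoint $n = (k+1)^3$, and the discrepancy between the range of $n$ in $S_{r,c}^{k,i}$ and the actual block — each contribute at most $O(q_k^{(i)}) = O(q_k^{(0)})$ terms of size $\le 1$, since $q_k^{(1)} = q_k^{(0)} - 1$. Hence $\sum_{n=k^3}^{(k+1)^3} g(T^n x)\mu(n) = S_{r,c}^{k,i} + O(q_k^{(0)})$.

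The main obstacle I expect is bookkeeping the index shifts correctly: one must carefully track how the constant-value blocks of $\Sigma_n(y)$ (indexed by $j$) line up with the outer summation variable $n$ in the definition of $S_{r,c}^{k,i}$, how the shift by $r$ in hypothesis (1) interacts with the inner sum over $b \in [r, q_k^{(i)}-1+r]$, and why the choice of $c$ in hypothesis (2) produces precisely the $\lambda(\cdots + c + k^3)$ appearing in $S_{r,c}^{k,i}$. The boundary/fractional blocks are genuinely harmless — each has length comparable to $q_k^{(i)}$ and the summand is bounded by $1$ — but making the $O(q_k^{(0)})$ claim rigorous requires checking that only boundedly many (in fact $O(1)$) such exceptional blocks arise, which follows from the step-function structure established in step one together with Lemma \ref{Lemma Tent strucutre}. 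No analytic input about $\mu$ or $\lambda$ is needed here; the entire content is the dynamical-combinatorial translation.
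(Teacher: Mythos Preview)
Your proposal is correct and follows essentially the same route as the paper's proof: both use Lemma~\ref{Lemma metric properties} to write $g(T^n x)=z_{\Sigma_n(y)}$, invoke Lemma~\ref{Lemma long tent} to identify $\Sigma_{k^3-1}(y)$ with $M_k^{(i)}$, exploit the $q_k^{(i)}$-periodic placement of $1$'s in $\sigma^{-r}s^{(k,i)}$ to see that $\Sigma_n(y)$ is a step function on $[k^3,(k+1)^3)$, reparametrize the sum as a double sum over $(j,b)$, read off the $\lambda$-value from hypothesis~(2), and match against the definition of $S_{r,c}^{k,i}$ with the boundary blocks contributing $O(q_k^{(0)})$. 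Your explicit warning that the only real work is the index bookkeeping is exactly right and mirrors the paper's computation.
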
 Note that by the construction of $P^{(i)} \times \lbrace -1,0,1\rbrace^\mathbb{Z}$ in Section \ref{Section construction}, there exists  an element $x$ as in the statement of the Lemma in that space.
\begin{proof}
In this proof we suppress the $i$ in our notation and simply write $q,\, M_k.$ 
First, for every two integers $j\in [M_k ,\, M_{k+1} ]$ and $b\in [r,\, q+r-1]$,
\begin{eqnarray*}
\sum_{d=1} ^{q(j-M_k)+b+k^3} \left( \Pi_1  x   \right) (d) &=& \sum_{d=1} ^{k^3-1} \left( \Pi_1  x  \right) (d)+ \sum_{d=k^3} ^{q(j-M_k)+b+k^3-1} \left( \Pi_1  x  \right) (d)\\
&=& M_k+ \sum_{d=k^3} ^{q(j-M_k)+b+k^3-1} s_{k} ^{(i)} (d-r)\\
&=&M_k+  \sum_{d=k^3-r} ^{q(j-M_k)+b+k^3-r-1} s_{k} ^{(i)} (d) =M_k+ j-M_k=j.\\
\end{eqnarray*}
Note the use of Lemma \ref{Lemma long tent} in the second equality, and the use of the definition of $s_k ^{(i)}$ together with the fact that $M_{k+1}-M_k = \left[ \frac{(k+1)^3-k^3 }{ q} \right]-1 $ in the last one. Therefore,
\begin{eqnarray*}& & \sum_{n=k^3} ^{(k+1)^3} g(T^n x) \mu (n)\\
 &= & \sum_{j=M_k} ^{M_{k+1} } \sum_{b=r} ^{q+r -1} g(T^{q\cdot (j-M_k)+b+k^3} x) \mu(q\cdot (j-M_k)+b+k^3) +O(q)\\
& = &\sum_{j=M_k} ^{ M_{k+1} } \sum_{b=r} ^{q+r -1} g\left( \sigma^{q\cdot (j-M_k)+b+k^3} \Pi_1 x, \, \sigma^{ \sum_{d=1} ^{q\cdot (j-M_k)+b+k^3} \left( \Pi_1  x  \right) (d)}  \Pi_2 x \right)   \mu(q\cdot (j-M_k)+b+k^3) \\
&  &\ \ \ + O(q)\\
&=& \sum_{b=r} ^{q+r -1} g\left( \sigma^{q\cdot (j-M_k)+b-r+k^3} s_{k} ^{(i)}, \, \sigma^{j} \Pi_2 x \right)   \mu(q(j-M_k)+b+k^3)  +   O(q) \\
&=&   \sum_{j=M_k} ^{ M_{k+1} } \sum_{b=r} ^{q+r -1} \lambda(q \cdot (j-M_k)+c+k^3) \cdot  \mu(q\cdot (j-M_k)+b+k^3) +O(q)\end{eqnarray*}

Indeed: The first equality follows since $g(T^n x)$ and $\mu$ are both bounded sequences, in the second equality we use Lemma \ref{Lemma metric properties}, and in the third equality we are using the previous equation array and the definition of $x$. This definition along with the definition of $s_k ^{(i)}$ justify the last equality, where we simply get the definition of $S_{r,c} ^{k ,i}$.
\end{proof}

\begin{Remark}  \label{Remark other possibility}
In the setup of Lemma \ref{Key Lemma}, we may similarly find another $x \in P^{(i)} \times \lbrace -1,0,1\rbrace^\mathbb{Z}$  that satisfies the conclusion  of Lemma \ref{Key Lemma}, but for  $-S_{r,c} ^{k, i}$. Indeed, this follows from the very same proof by picking $x \in P^{(i)} \times \lbrace -1,0,1\rbrace^\mathbb{Z}$ to be any element such that for every  $ k^3 \leq n<(k+1)^3$ we have $\Pi_1 x (n) = s_k ^{(i)} (n-r)$, and for $M_k ^{(i)} \leq n < M_{k+1} ^{(i)}$ we put $\Pi_2 x (n) = -\lambda \left( q_{k} ^{(i)} (j-M_k ^{(i)})+c+k^3\right)$.
\end{Remark}

We will also require the following Lemma:

\begin{Lemma} \label{Lemma intermediate}
For every $k$ large enough there  is either some $c\in [0 , q_{k} ^{(0)} )$ such that
$$
 S_{c,c}  ^{k, 0 } \geq \frac{1}{ 2 q_{k} ^{(0)} } \sum_{m=k^3} ^{(k+1)^3} \mu(m) \mu(m)- O(  q_{k} ^{(0)}),
$$
or some  $d\in [0 , q_{k} ^{(1)} )$ with
$$ -S_{d+1,d} ^{ k, 1} \geq \frac{1}{ 2 q_{k} ^{(1)} } \sum_{m=k^3} ^{(k+1)^3} \mu^2(m) - O(  q_{k} ^{(0)}).$$
\end{Lemma}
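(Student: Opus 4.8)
The plan is to reduce the assertion to a single \emph{exact} identity relating the two aggregate sums $\sum_{c}S_{c,c}^{k,0}$ and $\sum_{d}S_{d+1,d}^{k,1}$, and then to extract the dichotomy by a weighted pigeonhole. Write $q=q_k^{(0)}$, $q'=q_k^{(1)}=q-1$, recall (as in the proof of Lemma \ref{Key Lemma}) that $M_{k+1}^{(i)}-M_k^{(i)}=\left[\frac{(k+1)^3-k^3}{q_k^{(i)}}\right]-1$, and set $\Sigma=\sum_{m=k^3}^{(k+1)^3}\mu^2(m)$. First I would perform, in the definition of $S_{r,c}^{k,i}$, the substitution $\ell=n-M_k^{(i)}$ and write $a=q_k^{(i)}\ell+c+k^3$ and $j=b-c$; then the general term becomes $\lambda(a)\mu(a+j)$, and $j$ ranges over $\{0,1,\dots,q-1\}$ in the family $S_{c,c}^{k,0}$ (where $r=c$), but over exactly $\{1,\dots,q-1\}$ in the family $S_{d+1,d}^{k,1}$ (where $r=d+1$ and the window length is $q'=q-1$): the unit shift of $r$ and the drop of the window length by one conspire to remove precisely the offset $j=0$. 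For a fixed offset $j$, summing the inner term over all residues ($c$, resp. $d$) and over the admissible $\ell$, the assignment (residue, $\ell$) $\mapsto a$ is a bijection onto a block of consecutive integers starting at $k^3$ whose length is within $O(q)$ of $(k+1)^3-k^3$; since $\lambda,\mu$ are bounded I may replace this block by $[k^3,(k+1)^3]$ at a cost of $O(q)$ per value of $j$. This yields
$$\sum_{c=0}^{q-1}S_{c,c}^{k,0}=\sum_{j=0}^{q-1}\sum_{a=k^3}^{(k+1)^3}\lambda(a)\mu(a+j)+O(q^2),\qquad \sum_{d=0}^{q'-1}S_{d+1,d}^{k,1}=\sum_{j=1}^{q-1}\sum_{a=k^3}^{(k+1)^3}\lambda(a)\mu(a+j)+O(q^2).$$
Subtracting, every shifted-correlation sum with $j\ge1$ cancels identically, and only the $j=0$ term of the first sum survives; since $\lambda(a)\mu(a)=\mu^2(a)$ for every $a$, I obtain
$$\sum_{c=0}^{q-1}S_{c,c}^{k,0}-\sum_{d=0}^{q'-1}S_{d+1,d}^{k,1}=\Sigma+O(q^2).$$

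With this identity the rest is elementary. Put $P=\max_{0\le c<q}S_{c,c}^{k,0}$ and $N=\max_{0\le d<q'}\bigl(-S_{d+1,d}^{k,1}\bigr)$ (both maxima over nonempty sets once $k$ is large, since $q,q'\to\infty$). Then for an absolute constant $C$,
$$qP+q'N\ \ge\ \sum_{c=0}^{q-1}S_{c,c}^{k,0}+\sum_{d=0}^{q'-1}\bigl(-S_{d+1,d}^{k,1}\bigr)\ \ge\ \Sigma-Cq^2.$$
If $P\ge\frac{\Sigma}{2q}-Cq$, the maximising $c$ gives the first alternative, because $Cq=O(q_k^{(0)})$. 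Otherwise $P<\frac{\Sigma}{2q}-Cq$, so $qP<\frac{\Sigma}{2}-Cq^2$ and hence $q'N>\Sigma-Cq^2-\bigl(\frac{\Sigma}{2}-Cq^2\bigr)=\frac{\Sigma}{2}$, i.e. $N>\frac{\Sigma}{2q'}=\frac{1}{2q_k^{(1)}}\sum_{m=k^3}^{(k+1)^3}\mu^2(m)$, and the maximising $d$ gives the second alternative (no error term is even needed in this branch).

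I expect the cancellation step to be the only real point. Individually the sums $\sum_{a\le x}\lambda(a)\mu(a+j)$ are completely uncontrolled on a window of length comparable to $k^2$ (they could be as large as the whole window), so neither $\sum_c S_{c,c}^{k,0}$ nor $\sum_d S_{d+1,d}^{k,1}$ can be estimated on its own; the construction forces one to form their \emph{difference}, and the choice $q_k^{(1)}=q_k^{(0)}-1$ is exactly what makes that difference telescope down to the diagonal $\sum\mu^2$. The remaining work — the residue bijection and keeping the boundary errors at $O(q^2)$ — is routine bookkeeping, and the final weighted pigeonhole uses nothing beyond $q'=q-1$.
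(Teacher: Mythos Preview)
Your proof is correct and follows essentially the same route as the paper: rewrite $\sum_c S_{c,c}^{k,0}$ and $\sum_d S_{d+1,d}^{k,1}$ as sums of shifted correlations $\sum_a \lambda(a)\mu(a+j)$ over the window $[k^3,(k+1)^3]$ up to $O(q^2)$, subtract so that all $j\ge 1$ terms cancel and only the diagonal $\sum\mu^2$ survives, then pigeonhole. Your write-up is in fact more careful than the paper's at two points: you make the residue bijection $(c,\ell)\mapsto q\ell+c$ explicit (the paper leaves this implicit), and you spell out the final weighted pigeonhole $qP+q'N\ge\Sigma-Cq^2$ that the paper compresses into ``This implies the Lemma.''
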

\begin{proof}
In this proof we again simply write  $q,\, M$ for $q_{k} ^{(0)}, M_{k} ^{(0)}$ respectively. Now, for every $c,r \in [0,  q]$,
$$\sum_{c=0} ^{q-1} S_{c+r,c} ^{k,0} = \sum_{m=1} ^{(k+1)^3-k^3} \lambda(m+k^3)\cdot \left( \mu(m+r+k^3)+...+\mu(m+r+q-1+k^3) \right)$$
$$+  O\left( \frac{q^2}{(k+1)^3-k^3} \right)$$
So,
$$   \sum_{c=0} ^{q-1}  S_{c,c} ^{k,0} = \sum_{m=1} ^{(k+1)^3-k^3} \lambda(m+k^3)\cdot \left( \mu(m+k^3)+...+\mu(m+q-1+k^3) \right)+ O\left( q^2 \right).$$
Similarly,
$$  \sum_{c=1} ^{q-1} S_{c+1,c} ^{k, 1 } = \sum_{m=1} ^{(k+1)^3-k^3} \lambda(m+k^3)\cdot \left( \mu(m+1+k^3)+...+\mu(m+q-1+k^3) \right)+  O\left( q^2 \right).$$

Combining these equations, 
$$    \sum_{c=0} ^{q-1}  S_{c,c} ^{k,0}  -    \sum_{d=1} ^{q-1} S_{d+1,d} ^{k, 1 }  = \sum_{m=k^3} ^{(k+1)^3} \lambda(m) \mu(m) +O\left( q^2 \right)= \sum_{m=k^3} ^{(k+1)^3} \mu^2(m) +O\left( q^2 \right). $$
This implies the Lemma.
\end{proof}

\subsection{Construction of the point and system as in Theorem \ref{Main Theorem}} \label{Section main proof}
Recall that for every $k$ the  inequality from Lemma \ref{Lemma intermediate} is  given by $q_{k} ^{(i)}$ where $i$ is either $0$ or $1$. Recalling the spaces constructed in Section \ref{Section construction}, we define
\begin{equation}\label{EqExample} X:= X_0 \times X_1 \times \Sigma. \end{equation}
We now construct a point $x\in X$ as follows: For every  $k\in \mathbb{N}$ and $k^3 \leq n < (k+1)^3$, let $i=0,1$ correspond to the term  yielding the  inequality from Lemma \ref{Lemma intermediate}. We put $\ell:=i$ and  then define in the $\ell$-th coordinate $x^{(\ell)}(n):=x(n)$ where $x$ is as in Lemma \ref{Key Lemma} if $i= 0$ or Remark \ref{Remark other possibility} if $i =1$, corresponding to $k$, and either $r=c$ and $c$ (if $i=0$) or $r=d+1$ and $c=d$ (if $i=1$) yielding the inequality from Lemma \ref{Lemma intermediate}. Also, for the indices $k^3 \leq n < (k+1)^3$ we put $\ell$ in the $\Sigma$-coordinate of $x$.   For all $\ell=0,1$ and  digits not covered by the procedure above, we make some choice that ensures $x^{(\ell)} \in P^{(\ell)} \times \lbrace -1,0,1\rbrace^\mathbb{Z}$. Note that by Lemma  \ref{Key Lemma}  and the construction of $P^{(\ell)}$, such a choice is readily available.

Finally, we make $X$ a dynamical system via the self-map $\hat{T} \in \mathcal{C}(X)$ defined by
$$\hat{T}(p^{(0)},\, p^{(1)},\, s) =  (T p^{(0)},\, T p^{(1)},\, \sigma(s)).$$
The function $f\in \mathcal{C}(X)$ is taken to be 
$$f( (y^{(0)}, z^{(0)}), \, (y^{(1)}, z^{(1)}),\, \, s) = z_0 ^{(s_0)}.$$

We now prove part (1) of Theorem \ref{Main Theorem} via the following two claims:

\begin{Claim}
We have $h(X, \hat{T})=0$.
\end{Claim}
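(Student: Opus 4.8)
The plan is to reduce immediately to the two facts already established, namely Claim \ref{Zero entropy for each factor} ($h(X_i,T)=0$ for $i=0,1$) and Lemma \ref{Lemma zero entropy} ($h(\Sigma,\sigma)=0$), by exploiting the fact that $\hat T$ acts coordinatewise on the product $X=X_0\times X_1\times\Sigma$. First I would note that $X_0,X_1\subseteq Z$ and $\Sigma\subseteq\lbrace 0,1\rbrace^{\mathbb N}$ are compact metric spaces and that $\hat T = T\times T\times\sigma$, so that $(X,\hat T)$ is literally the topological product of the three systems. Then the product rule for topological entropy (see e.g. \cite[Chapter 7]{Walters1982ergodic}) gives
$$h(X,\hat T) = h(X_0,T) + h(X_1,T) + h(\Sigma,\sigma) = 0+0+0 = 0,$$
which is the assertion.

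If one prefers to avoid quoting the product formula and instead argue in the same spirit as the proofs of Claim \ref{Zero entropy for each factor} and Lemma \ref{Lemma zero entropy}, the alternative is to work through the variational principle: let $\nu$ be any $\hat T$-invariant Borel probability measure on $X$, and let $\mathcal P_0,\mathcal P_1,\mathcal P_\Sigma$ be finite partitions of $X$ that are pullbacks under the three coordinate projections of generating partitions of $X_0,X_1,\Sigma$ respectively. Since each projection is a factor map onto a zero-entropy system, $h_\nu(\hat T,\mathcal P_j)$ equals the measure-theoretic entropy of the corresponding marginal of $\nu$ with respect to a partition of a zero-entropy system, hence vanishes; because $\mathcal P_0\vee\mathcal P_1\vee\mathcal P_\Sigma$ refines arbitrarily fine partitions of $X$ as one lets the generators grow and since $h_\nu(\hat T,\mathcal P_0\vee\mathcal P_1\vee\mathcal P_\Sigma)\le h_\nu(\hat T,\mathcal P_0)+h_\nu(\hat T,\mathcal P_1)+h_\nu(\hat T,\mathcal P_\Sigma)$, we get $h_\nu(\hat T)=0$. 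The variational principle \cite[Chapter 8]{Walters1982ergodic} then yields $h(X,\hat T)=0$.

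There is essentially no genuine obstacle in this proof; it is a bookkeeping step. The only mild point of care, in the second approach, is that an $\hat T$-invariant measure on the product need not be a product measure, so one cannot simply decompose $\nu$ — which is precisely why the argument is phrased in terms of the generating partitions and the subadditivity of entropy over joins rather than in terms of the marginals directly. Given the availability of Claim \ref{Zero entropy for each factor} and Lemma \ref{Lemma zero entropy}, I would in practice just state the one-line product-formula argument.
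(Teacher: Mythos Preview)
Your proposal is correct and matches the paper's approach exactly: the paper's proof is the one-liner that each factor in the product $X=X_0\times X_1\times\Sigma$ has zero entropy by Claim~\ref{Zero entropy for each factor} and Lemma~\ref{Lemma zero entropy}, and then appeals to ``standard arguments''. Your write-up is in fact more detailed than the paper's, which does not spell out whether the standard argument is the product formula or the variational-principle route.
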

\begin{proof}
By Claim \ref{Zero entropy for each factor} and Lemma \ref{Lemma zero entropy} each factor in the product space $X$ has zero entropy,  which implies the assertion via standard arguments. 
\end{proof}

\begin{Claim}
For all $N$ large enough,
$$  \sum_{n=1} ^{N} \frac{f(\hat{T}^n x) \mu(n)}{n} \geq \tau(N) \sum_{n=1} ^{N}  \frac{\mu^2(n)}{n}-O(1),$$
where $O(1)$ does not depend on $N$.  In particular,
$$\liminf_{N\rightarrow \infty} \frac{1}{ \left( \log N \right) \cdot  \tau(N)} \sum_{n=1} ^{N} \frac{f(\hat{T}^n x) \mu(n)}{n} \geq \frac{6}{\pi^2}.$$ 
\end{Claim}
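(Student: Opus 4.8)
The plan is to evaluate the logarithmically weighted sum $\sum_{n=1}^N f(\hat T^n x)\mu(n)/n$ by grouping the index $n$ into the dyadic-like blocks $[k^3,(k+1)^3)$ that underlie the entire construction, using the fact that on each such block the point $x$ was built precisely so that the relevant $g$-correlation sum realizes the favorable bound from Lemma \ref{Lemma intermediate}. First I would note that, by the definition of $f$, $\hat T$, and the point $x$, for $n\in[k^3,(k+1)^3)$ we have $f(\hat T^n x)=g(T^n x^{(\ell)})$ where $\ell=\ell(k)\in\{0,1\}$ is the coordinate selected for block $k$, and $x^{(\ell)}$ restricted to this block is exactly the element furnished by Lemma \ref{Key Lemma} (if $\ell=0$) or Remark \ref{Remark other possibility} (if $\ell=1$). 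Hence Lemma \ref{Key Lemma} (resp.\ its signed variant) gives
$$\sum_{n=k^3}^{(k+1)^3} f(\hat T^n x)\mu(n) = \pm S^{k,\ell}_{r,c} + O\bigl(q_k^{(0)}\bigr),$$
with the sign and the parameters $(r,c)$ chosen so that, by Lemma \ref{Lemma intermediate}, the main term is at least $\frac{1}{2q_k^{(\ell)}}\sum_{m=k^3}^{(k+1)^3}\mu^2(m) - O(q_k^{(0)})$.

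Next I would insert the weights $1/n$. On the block $[k^3,(k+1)^3)$ the weight $1/n$ varies only by a factor $1+O(1/k)$, so one may replace $1/n$ by $1/k^3$ inside the block at the cost of a relative error that, after summing the error against the block and then over $k\le K$ (where $K\asymp N^{1/3}$), contributes only $O(1)$ — here one uses $q_k^{(0)}<k^{1/8}$ together with $\sum_k q_k^{(0)}/k^3<\infty$ to absorb the $O(q_k^{(0)})$ error terms, and the Mertens-type estimate $\sum_{m=k^3}^{(k+1)^3}\mu^2(m)=O(k^2)$ to control the main term's block-to-block weight discrepancy. This reduces the estimate to
$$\sum_{n=1}^N \frac{f(\hat T^n x)\mu(n)}{n} \ \ge\ \sum_{k\le K}\frac{1}{2q_k^{(\ell(k))} k^3}\sum_{m=k^3}^{(k+1)^3}\mu^2(m) \ -\ O(1).$$
Now I invoke the defining property $\frac{1}{2q_k^{(0)}}>\tau(k^3)$ of the sequence $q_k^{(0)}$ (and $q_k^{(1)}=q_k^{(0)}-1$, so $\frac{1}{2q_k^{(1)}}\ge \frac{1}{2q_k^{(0)}}>\tau(k^3)$), together with monotonicity of $\tau$, to bound $\frac{1}{2q_k^{(\ell)}k^3}\ge \tau(k^3)/k^3\ge \tau(n)/n$ for $n$ in the block (up to the same harmless $1+O(1/k)$ factor), giving the first displayed inequality of the Claim, $\sum_{n=1}^N f(\hat T^n x)\mu(n)/n\ge \tau(N)\sum_{n=1}^N \mu^2(n)/n - O(1)$; a small amount of care is needed at the top of the range because $N$ need not be a perfect cube, but the tail block contributes $O(1)$ by the same trivial bounds.

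For the final $\liminf$ statement I would divide by $\tau(N)\log N$ and use the classical density estimate $\sum_{n\le N}\mu^2(n)/n = \frac{6}{\pi^2}\log N + O(1)$ (squarefree numbers have density $6/\pi^2$, summed by parts), so that the right-hand side tends to $\frac{6}{\pi^2}$ while the $O(1)/(\tau(N)\log N)$ term vanishes since $\tau(N)\log N\to\infty$ (as $\tau>0$ and $\log N\to\infty$, and one may assume $\tau$ decays slowly enough that $\tau(N)\log N\to\infty$ — indeed this is implicit in the standing assumption that $\tau$ tends to zero sufficiently slowly, and in any case one only needs $\liminf>0$, for which $\tau(N)\log N\not\to 0$ suffices). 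The main obstacle, and the step requiring the most care, is the bookkeeping of error terms when passing from the unweighted block sums of Lemma \ref{Key Lemma} to the weighted sum: one must check that each source of error — the $O(q_k^{(0)})$ from Lemmas \ref{Key Lemma} and \ref{Lemma intermediate}, the within-block variation of $1/n$, and the incomplete final block — sums over $k\le K\asymp N^{1/3}$ to a quantity bounded independently of $N$, which is exactly where the growth constraints $q_k^{(0)}<k^{1/8}$ and $\frac{1}{2q_k^{(0)}}>\tau(k^3)$ built into Section \ref{Section construction} are used.
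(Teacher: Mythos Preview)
Your proposal is correct and follows essentially the same approach as the paper: block into intervals $[k^3,(k+1)^3)$, use Lemma~\ref{Key Lemma} together with Lemma~\ref{Lemma intermediate} to get the lower bound $\frac{1}{2q_k}\sum_{m=k^3}^{(k+1)^3}\mu^2(m)-O(q_k)$ on each block, replace $1/n$ by $1/k^3$ at total cost $O(1)$ using $q_k<k^{1/8}$, and then invoke $\frac{1}{2q_k}>\tau(k^3)\ge\tau(N)$ and the asymptotic $\frac{1}{\log N}\sum_{n\le N}\mu^2(n)/n\to 6/\pi^2$. The only cosmetic difference is that the paper first pulls out the common factor $\frac{1}{2q_{N'-1}}$ and then applies $\frac{1}{2q_{N'-1}}>\tau(N)$ once, whereas you apply $\frac{1}{2q_k^{(\ell)}}>\tau(k^3)$ block by block; your ordering avoids an implicit monotonicity assumption on $q_k$ but is otherwise the same computation.
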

\begin{proof}
In this proof whenever we write $q_k$ we mean $q_k ^{(0)}$ (this is of little consequence since $q_k ^{(1)} = q_k ^{(0)} -1$). First, we claim that for every large enough $k\in \mathbb{N}$,
\begin{equation} \label{Eq key}
 \sum_{n=k^3} ^{(k+1)^3} f(\hat{T}^n x ) \mu(n) \geq  \frac{1}{2 q_{k}  }\sum_{n=k^3}^{(k+1)^3}\mu(n)^2 -O(q_k)  
.\end{equation}
Indeed, this follows since by our construction,
$$  \sum_{n=k^3} ^{(k+1)^3} f(\hat{T}^n x ) \mu(n)= \sum_{n=k^3} ^{(k+1)^3} g(\hat{T}^n x^{(\ell)} (n) ) \mu(n)$$
where $g$ and $x^{(\ell)} (n)$ are as in Lemma \ref{Key Lemma} (corresponding to the parameters as in the choice of $x$). Then \eqref{Eq key} follows directly from a combination of Lemma \ref{Key Lemma} and Lemma \ref{Lemma intermediate}, together with the construction of $x$ and of $q$.

Given $N$ let $N'$ be such that $(N')^3$ is the largest cube satisfying $(N')^3 \leq N$. Then
$$N^{\frac{1}{3}} -1 \leq N' \leq N^{\frac{1}{3}}.$$
And,
\begin{eqnarray*}
&&\sum_{k=1} ^{N} \frac{f(\hat{T}^n x) \mu(n)}{n} \\
&=& \sum_{k=1} ^{N'-1} \sum_{n=k^3} ^{(k+1)^3} \frac{f(\hat{T}^n x) \mu(n)}{n} + \sum_{n=(N')^3} ^{N} \frac{f(\hat{T}^n x) \mu(n)}{n}  \\
&=& \sum_{k=1} ^{N'-1} \sum_{n=k^3} ^{(k+1)^3} \frac{f(\hat{T}^n x) \mu(n)}{k^3} - O\left(\sum_{k=1} ^{N'-1} ((k+1)^3 - k^3) \big(\frac 1{k^3}-\frac 1{(k+1)^3}\big)\right)- O\left ( \sum_{n=(N')^3} ^{N}\frac 1n\right)
\end{eqnarray*}

We now make use of \eqref{Eq key} and get 
\begin{eqnarray*}
&&\sum_{k=1} ^{N} \frac{f(\hat{T}^n x) \mu(n)}{n} \\
&\geq & \sum_{k=1} ^{N'-1}\left( \sum_{n=k^3} ^{(k+1)^3}  \frac{\mu^2(n)}{2q_k k^3} - O\left(\frac {q_k^2}{k^3}\right) \right)+ O\left(\sum_{k=1} ^{N'-1} \frac 1{k^2}\right)-  O\left( \log N-\log (N')^3 \right)\\ 
&\geq & \frac 1{2q_{N'-1}} \sum_{k=1} ^{N'-1} \sum_{n=k^3} ^{(k+1)^3}  \frac{\mu^2(n)}{  k^3}- O\left(\sum_{k=1} ^{N'-1}\frac{q_k^2} {k^3}\right) -O\left(\sum_{k=1} ^{N'-1}\frac 1{k^2}\right)-O\left( \log \frac{N}{ (N')^3} \right)\\
&\geq & \frac 1{2q_{N'-1}} \sum_{k=1} ^{N'-1} \sum_{n=k^3} ^{(k+1)^3}  \frac{\mu^2(n)}{n}-O(1)\\
&\geq&  \tau(N) \sum_{k=1} ^{N'-1} \sum_{n=k^3} ^{(k+1)^3}  \frac{\mu^2(n)}{n}-O(1)\\
&\geq&  \tau(N)  \sum_{n=1} ^{N}  \frac{\mu^2(n)}{n}-O\left(\tau(N)\sum_{n=(N')^3+1}^N\frac 1n\right)-O(1)\\
&\geq & \tau(N)  \sum_{n=1} ^{N}  \frac{\mu^2(n)}{n}-O(1).
\end{eqnarray*}
Note that we made of the facts that $q_k\leq k^{\frac18}$ and $\frac{1}{2q_k} > \tau(k^3)$ in the computations. The proof of the Claim, and thus of Theorem \ref{Main Theorem}, follows immediately by the standard fact that  $\lim_{N\to\infty}\frac 1{\log N} \sum_{n=1} ^{N}  \frac{\mu^2(n)}{n}=\frac6{\pi^2}$.

\end{proof}

\section{Proof of Theorem \ref{Main Theorem} Part (2)} \label{Part (2)}

In this Section we prove Part (2) of Theorem \ref{Main Theorem}. That is, we show that the system $(X, \hat{T})$ given in \eqref{EqExample} satisfies the logarithmically averaged  M\"{o}bius disjointness conjecture  \eqref{Eq log Sarnak}. In fact, we will  prove a stronger claim, that $(X, \hat{T})$ satisfies the "usual" M\"{o}bius disjointness conjecture \eqref{Eq Sarnak}.

To this end, we will invoke the following (special case of a) Theorem of Huang, Wang ,and Ye \cite{Wang2019Ye}: Let $\rho$ be an invariant measure for $(X, \hat{T})$. Letting $d$ be the $\sup$ metric on $X= X_0 \times X_1 \times \Sigma$, for every $n$ define a metric on $X$ via
$$\bar{d} (x,y)= \frac{1}{n} \sum_{i=0} ^{n-1} d(\hat{T} ^i x, \hat{T} ^i y).$$
Let $\epsilon>0$ and let
$$S_n(d,\rho,\epsilon) = \left\lbrace \min m: \exists x_1,...,x_m \text{ s.t. } \rho\left( \bigcup_{i=1} ^m B_{\bar{d}} (x_i,\epsilon) \right)>1-\epsilon \right\rbrace.$$
We say that $\rho$ has bounded measure complexity if for every $\epsilon>0$ we have that $S_n(d,\rho,\epsilon)=O_{\epsilon,\rho} (1)$.
\begin{theorem} \cite[Theorem 1.1]{Wang2019Ye} \label{Theorem zhiren}
If  every invariant measure $\rho$ has bounded measure complexity then $(X, \hat{T})$ satisfies the M\"{o}bius disjointness conjecture \eqref{Eq Sarnak}.
\end{theorem}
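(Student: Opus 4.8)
\textbf{The plan} is to prove the general Huang--Wang--Ye implication: if every $\hat T$-invariant measure on $X$ has bounded measure complexity, then $\frac1N\sum_{n\le N}\mu(n)f(\hat T^n x)\to0$ for all $f\in\mathcal{C}(X)$ and $x\in X$. The strategy is to convert the complexity hypothesis into a spectral/structural statement about invariant measures, and then to reduce the Möbius cancellation to the classical Davenport estimate $\sup_{\theta}\big|\sum_{n\le N}\mu(n)e(n\theta)\big|=o(N)$.

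\textbf{Step 1 (complexity $\Rightarrow$ discrete spectrum).} The first reduction is purely ergodic-theoretic and does not see $\mu$. For an ergodic measure-preserving system, the bound $S_n(d,\rho,\epsilon)=O_\epsilon(1)$ is equivalent to $\rho$-mean equicontinuity, and this is in turn equivalent to the Koopman operator of $(X,\hat T,\rho)$ having purely discrete spectrum (the Huang--Li--Thouvenot--Xu--Ye circle of equivalences: the easy direction is that finitely many eigenfunctions already produce a bounded number of $\bar d_n$-balls covering most of the space, and the substantive direction extracts an isometric model from the complexity bound). Applying the ergodic decomposition, the hypothesis says that every ergodic component of every $\hat T$-invariant measure on $X$ has discrete spectrum, hence by Halmos--von Neumann is measurably isomorphic to an ergodic rotation on a compact abelian group.

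\textbf{Step 2 (reduction to Davenport).} Fix $f\in\mathcal{C}(X)$ with $\|f\|_\infty\le 1$ and $x\in X$, and suppose for contradiction that $\frac1{N_k}\big|\sum_{n\le N_k}\mu(n)f(\hat T^n x)\big|\ge\delta$ along some $N_k\to\infty$. Passing to a subsequence, let the empirical measures $\frac1{N_k}\sum_{n\le N_k}\delta_{\hat T^n x}$ converge weak-$*$ to an invariant measure $\rho$. The crux is to use the complexity bound, not merely the qualitative conclusion of Step 1, to obtain a \emph{topologically effective} version of this structure: a compact abelian group rotation $(G,R_g)$, a point $g_0\in G$, and a continuous factor map $\pi\in\mathcal{C}(G,X)$ such that the orbit segment $(x,\hat Tx,\dots,\hat T^{N_k-1}x)$ is $\bar d_{N_k}$-approximated, up to an error $o(1)$, by $(\pi g_0,\pi R_g g_0,\dots,\pi R_g^{N_k-1}g_0)$. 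Granting this, $\frac1{N_k}\sum_{n\le N_k}\mu(n)f(\hat T^n x)$ equals, up to $o(1)$, $\frac1{N_k}\sum_{n\le N_k}\mu(n)(f\circ\pi)(R_g^n g_0)$; expanding $f\circ\pi\in\mathcal{C}(G)$ in group characters $\chi$, the $\chi$-term is $\chi(g_0)\,\widehat{f\circ\pi}(\chi)\cdot\frac1{N_k}\sum_{n\le N_k}\mu(n)e(n\theta_\chi)$ with $\chi(g)=e(\theta_\chi)$, and Davenport's estimate kills each term uniformly in $\theta_\chi$; truncating the (absolutely convergent after mollification) character expansion and summing yields $\frac1{N_k}\sum_{n\le N_k}\mu(n)f(\hat T^n x)\to0$, contradicting the choice of $\delta$. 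As $f$ and $x$ were arbitrary, $(X,\hat T)$ satisfies \eqref{Eq Sarnak}.

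\textbf{Main obstacle.} The genuine difficulty lies entirely in Step 2, and it is the absence of unique ergodicity: $x$ need not be generic for any single invariant measure, and the Halmos--von Neumann isomorphism of Step 1 is only measurable, so one must leverage the quantitative mean-equicontinuity bound to manufacture an honest topological/pointwise rotation-approximation of the orbit of $x$ that is valid for all quasi-generic measures of $x$ simultaneously, and one must control the passage from $L^2$-eigenfunctions to the continuous functions actually being averaged. An alternative packaging replaces the Davenport/Halmos--von Neumann route by the Bourgain--Sarnak--Ziegler orthogonality criterion together with the rigidity of self-joinings of the powers $\hat T^p,\hat T^q$ of a discrete-spectrum system (asymptotic orthogonality of powers), but the same non-unique-ergodicity globalization remains the bottleneck; a secondary technical point is uniformity of all error terms in the truncation level of the character expansion.
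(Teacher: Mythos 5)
The statement is cited in the paper as \cite[Theorem 1.1]{Wang2019Ye} and is not proved there---Algom--Wang invoke it as a black box---so there is no in-paper argument to compare against; the question is whether your sketch re-derives the Huang--Wang--Ye theorem and whether it matches their route. Your Step 1 is correct in outline: for ergodic $\rho$, bounded measure complexity is equivalent to $\rho$-mean equicontinuity, which in turn is equivalent to the Koopman operator on $L^2(\rho)$ having purely discrete spectrum, and ergodic decomposition carries this to every invariant measure. The gap is in Step 2, exactly where you flag it, and it is not a side technicality but the full content of the theorem: the Halmos--von Neumann conjugacy from Step 1 is only an $L^2(\rho)$-measurable isomorphism valid off a $\rho$-null set, whereas $\frac{1}{N}\sum_{n\le N}\mu(n)f(\hat T^n x)$ concerns a fixed point $x$ that may be quasi-generic for several distinct invariant measures along different subsequences and need not lie in the good set of any of them. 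Nothing in the hypothesis manufactures, for such an $x$, a single compact-group rotation with a continuous factor map $\bar d_N$-approximating its orbit segments uniformly over all its quasi-generic measures; to ``grant'' that approximation, as your sketch does, is essentially to assume the conclusion. Davenport's estimate treats genuinely Kronecker orbits, and the passage from $L^2$-eigenfunctions of the (varying) ergodic components to the fixed continuous observable being averaged has no route around this globalization.

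This is also not the route Huang--Wang--Ye actually take. Their proof is combinatorial rather than spectral: they partition the M\"obius sum into short arithmetic progressions, use the measure-complexity bound to cover the relevant orbit segments by a bounded (or sub-polynomial) number of $\bar d_n$-balls---and this is where non-unique ergodicity is handled, by working with the full set of quasi-generic measures of $x$ directly rather than with any one Halmos--von Neumann model---and then apply the Matom\"aki--Radziwi\l\l--Tao theorem on cancellation of $\lambda$ (hence $\mu$) in almost all short intervals to show each surviving ball carries negligible M\"obius correlation. No discrete-spectrum structure theorem and no Davenport-type exponential-sum estimate appear; the entire arithmetic input is the short-interval result. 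Your spectral route does give a clean proof in the uniquely ergodic case (a point generic for a single discrete-spectrum measure with a uniquely ergodic topological model), but it does not extend to the non-uniquely-ergodic setting that the theorem addresses, and bridging that gap is precisely the purpose of the measure-complexity and Matom\"aki--Radziwi\l\l--Tao machinery that your sketch does not invoke.
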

Thus, via Theorem \ref{Theorem zhiren}, if we prove the following Claim then Theorem \ref{Main Theorem} part (2) will follow:
\begin{Claim} \label{Claim finite}
The  system $(X, \hat{T})$ has bounded measure complexity.
\end{Claim}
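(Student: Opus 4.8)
The plan is to show that every invariant measure $\rho$ on $(X,\hat T)$ is, in a strong sense, supported on fixed points of $\hat T$, from which bounded measure complexity follows almost immediately. More precisely, I would first argue that every ergodic invariant measure $\rho$ for $(X,\hat T)$ is a Dirac mass on a $\hat T$-fixed point. Indeed, the arguments behind Claim \ref{Zero entropy for each factor} and Lemma \ref{Lemma zero entropy} already establish that every ergodic measure for $(X_i,T)$ projects (in the base coordinate) to the Dirac mass at $\bar 0$, and every ergodic measure for $(\Sigma,\sigma)$ is the Dirac mass at some $\bar i$; since $\hat T$ acts coordinatewise on the product $X_0\times X_1\times\Sigma$, and the fiber shift over $\bar 0$ is trivial, an ergodic $\rho$ must be supported on a single point $x_0$ with $\hat T x_0=x_0$. (One should double-check that the base being $\bar 0$ forces the fiber coordinates to also be fixed: this is exactly the statement in Claim \ref{Zero entropy for each factor} that $T(y,z)=(y,z)$ for $\nu$-a.e.\ $(y,z)$, i.e.\ $\sigma^{y_1}z=z$ with all $y_i=0$, so $z$ is simply left invariant.)

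Next I would pass from ergodic measures to a general invariant measure $\rho$ via the ergodic decomposition: $\rho=\int \rho_\omega\,d\mathbb P(\omega)$ where each $\rho_\omega=\delta_{x_\omega}$ with $\hat T x_\omega=x_\omega$. Thus $\rho$ is supported on $\mathrm{Fix}(\hat T)$, the set of fixed points. On $\mathrm{Fix}(\hat T)$ the dynamics is trivial, so $\bar d(x,y)=d(x,y)$ for every $n$; hence $S_n(d,\rho,\epsilon)$ reduces to the minimal number of ordinary $\epsilon$-balls needed to cover a set of $\rho$-measure $>1-\epsilon$, which is bounded by the covering number $N(\mathrm{Fix}(\hat T),\epsilon)$ of a fixed compact metric space — a quantity independent of $n$. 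So $S_n(d,\rho,\epsilon)=O_{\epsilon}(1)$, which is exactly bounded measure complexity. Applying Theorem \ref{Theorem zhiren} then gives that $(X,\hat T)$ satisfies \eqref{Eq Sarnak}, and a fortiori \eqref{Eq log Sarnak} (or one cites the summation-by-parts remark after Theorem \ref{Main Theorem}).

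The one genuine obstacle I anticipate is the ergodic-decomposition step combined with the $\bar d$-metric: $S_n(d,\rho,\epsilon)$ is defined through the Hamming-type averaged metric $\bar d$, and one must be careful that on the support of $\rho$ — which lies in $\mathrm{Fix}(\hat T)$ but is a priori only a closed, possibly infinite subset of $X$ — the averaged metric really does collapse to $d$ uniformly (it does, since $\hat T^i x=x$ pointwise there, so there is no issue, but this should be said explicitly). A secondary point worth spelling out is that $\bigcup_n \mathrm{supp}(\rho_\omega)$ need not be all of $\mathrm{Fix}(\hat T)$, so one only needs the covering number of $\mathrm{supp}(\rho)\subseteq X$, which is still $O_\epsilon(1)$ by compactness of $X$; alternatively, since $\mathrm{Fix}(\hat T)$ is closed in the compact space $X$ it is itself compact and totally bounded, giving a uniform bound over all invariant $\rho$ at once. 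Everything else is routine.
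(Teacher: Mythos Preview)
Your proposal is correct and follows essentially the same route as the paper: use \eqref{Eq Key claim 1} and \eqref{Eq Key claim 2} (via the ergodic theorem) to see that any ergodic $\hat T$-invariant measure is supported on $\hat T$-fixed points, pass to general invariant $\rho$ by ergodic decomposition, and observe that a measure supported on fixed points trivially has bounded measure complexity. The paper is terser at the last step (it simply says ``this clearly implies\ldots''), whereas you spell out explicitly that $\bar d=d$ on $\mathrm{Fix}(\hat T)$ and bound $S_n(d,\rho,\epsilon)$ by a covering number independent of $n$; this extra detail is fine and correct.
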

\begin{proof}
Recall that 
$$X = X_0 \times X_1 \times \Sigma$$
where all these spaces were constructed in Section \ref{Section construction}. We now show 
 that \eqref{Eq Key claim 1} and \eqref{Eq Key claim 2}, that were already proved in Section \ref{Section construction}, imply the Claim: Indeed, let $\nu$ be a $\hat{T}$ ergodic invariant measure. Let $\tilde{\nu}$ be its projection to $\Pi_1 X_1 \times \Pi_1 X_2 \times \Sigma$, where $\Pi_1$ is the projection to the first coordinate. Then, by \eqref{Eq Key claim 1} and \eqref{Eq Key claim 2}  and the ergodic Theorem, $\tilde{\nu}$ is the Dirac measure on $\lbrace \overline{0} \rbrace \times \lbrace \overline{0} \rbrace \times \lbrace \overline{i} \rbrace$ for some $i\in \lbrace 0,1\rbrace$. It follows that for  $\nu$-a.e. $x$ we have that $\hat{T}x =x$. Therefore, for any invariant measure $\rho$ we have that for  $\rho$-a.e. $x$, $\hat{T}x =x$.  This clearly implies that $\rho$ has bounded measure complexity, as claimed.

\end{proof}
Thus, Theorem \ref{Main Theorem} part (2) is proved.

Finally, we make two more remarks. First, in \cite{Haung2021Ye} it is shown that systems with bounded measure complexity have zero entropy. So, for an alternative proof of  Claims \ref{Zero entropy for each factor} and \ref{Lemma zero entropy} we could have argued (as we do above) that \eqref{Eq Key claim 1} and \eqref{Eq Key claim 2} imply bounded measure complexity, and then appeal directly to \cite{Haung2021Ye}. Also, via Claims \ref{Zero entropy for each factor} and \ref{Lemma zero entropy} and their proof, an alternative proof of Theorem \ref{Main Theorem} part (2) may be derived from \cite{MR16} using the arguments presented in \cite[Section 3.4.1]{Fere2018Lem}. 

\section{Acknowledgements}
This research was supported by Grant No. 2022034 from the United States - Israel Binational Science Foundation 
(BSF), Jerusalem, Israel. 
Z.W. was also supported by the NSF grant DMS-1753042 and a von Neumann Fellowship at the IAS.

\bibliography{bib}
\bibliographystyle{plain}

\end{document}